
\documentclass[10pt]{amsart}
\usepackage{enumerate,amsmath,amssymb,latexsym,
amsfonts, amsthm, amscd, mathabx}


\setlength{\textwidth}{14.5cm}
\setlength{\textheight}{22cm}

\hoffset=-2.0cm
\voffset=-.8cm


\theoremstyle{plain}

\newtheorem{theorem}{Theorem}

\numberwithin{equation}{section}

\newcommand{\PP}{\mathbf{P}}
\newcommand{\PPi}{\mathbf{\Pi}}
\newcommand{\LLL}{\mathbf{L}}
\newcommand{\h}{\hash}

\addtocounter{section}{-1}


\begin{document}

\title {Projective Space: Points and Planes}

\date{}

\author[P.L. Robinson]{P.L. Robinson}

\address{Department of Mathematics \\ University of Florida \\ Gainesville FL 32611  USA }

\email[]{paulr@ufl.edu}

\subjclass{} \keywords{}

\begin{abstract}

We take points and planes as fundamental, lines as derived, in an axiomatic formulation of three-dimensional projective space, the self-dual nature of which formulation renders automatic the principle of duality. 

\end{abstract}

\maketitle

\medbreak

\section*{Introduction} 

Traditional axiomatic approaches to three-dimensional projective space start from points and lines as fundamental elements and from them construct planes as derived elements. Accordingly, the {\it primary} notion `point' and the {\it secondary} notion `plane' are paired in the principle of duality, the proof of which thereby involves some slight awkwardness. In [R] we detailed an equivalent axiomatization that takes line alone as fundamental element, from which point and plane are derived in parallel: the principle of duality (which pairs the secondary notions) requires no proof, as the axioms are self-dual. Our purpose here is to pursue an alternative course that naturally presents itself: we take point and plane as fundamental, line as derived; again the axioms are self-dual, so the principle of duality (which pairs the primary notions) is automatic. 

\medbreak 

\section*{The Axiomatic Formulation}

\medbreak 

We begin with a pair of disjoint nonempty sets: $\PP$, whose elements we call points and denote by upper case Roman letters; $\PPi$, whose elements we call planes and denote by lower case Greek letters. Between these sets we posit a relation $\h$ of incidence: when $P \in \PP$ and $\pi \in \PPi$ are incident, we write $P \h \pi$ and $\pi \h P$ indiscriminately, saying that $P$ lies in $\pi$ or that $\pi$ passes through $P$; we may also say that $P$ is on $\pi$ or that $\pi$ is on $P$. Some additional notation will simplify the presentation of our axioms. 

\medbreak 

When $P \in \PP$ we define 
$$P^{\h} = \{ \pi \in \PPi : P \h \pi \}$$
and when $\pi \in \PPi$ we define 
$$\pi^{\h} = \{ P \in \PP : \pi \h P \}.$$
More generally, when $\mathbf{S} \subseteq \PP$ we shall write 
$$\mathbf{S}^{\h} = \{ \pi \in \PPi : (\forall P \in \mathbf{S}) P \h \pi \}$$
and when $\mathbf{\Sigma} \subseteq \PPi$ we shall write 
$$\mathbf{\Sigma}^{\h} =  \{ P \in \PP : (\forall \pi \in \mathbf{\Sigma}) \pi \h P \}.$$
Further, it will be convenient to write $\mathbf{S} \h \mathbf{\Sigma}$ to mean that $P \h \pi$ whenever $P \in \mathbf{S}$ and $\pi \in \mathbf{\Sigma}$; for example, $\mathbf{S} \h \mathbf{S}^{\h}$ and $\mathbf{\Sigma} \h \mathbf{\Sigma}^{\h}$. By definition, 
$$\mathbf{S} \subseteq \mathbf{\Sigma}^{\h} \Leftrightarrow \mathbf{S} \h \mathbf{\Sigma} \Leftrightarrow \mathbf{S}^{\h} \supseteq \mathbf{\Sigma}.$$

\medbreak 

The incidence relation $\h$ satisfies the usual `Galois' properties, among which are the folowing: 
$$\mathbf{S_1} \subseteq \mathbf{S_2} \subseteq \PP \Rightarrow \mathbf{S_1}^{\h} \supseteq \mathbf{S_2}^{\h}$$
$$\mathbf{\Sigma_1} \subseteq \mathbf{\Sigma_2} \subseteq \PPi \Rightarrow \mathbf{\Sigma_1}^{\h} \supseteq \mathbf{\Sigma_2}^{\h}$$ 
$$\mathbf{S} \subseteq \PP \Rightarrow \mathbf{S} \subseteq \mathbf{S}^{\h \h}$$ 
$$\mathbf{\Sigma} \subseteq \PPi \Rightarrow \mathbf{\Sigma} \subseteq \mathbf{\Sigma}^{\h \h}.$$

\medbreak 

We are now able to present our axioms for three-dimensional projective space, as follows. 

\medbreak 

\noindent 
AXIOM [1] \\ 
If $P \in \PP$ then $P^{\h} \ne \PPi$. \\ 
If $\pi \in \PPi$ then $\pi^{\h} \ne \PP$. 

\medbreak 

\noindent 
AXIOM [2] \\ 
If $A, B \in \PP$ then $| \{ A, B \}^{\h} | > 2$. \\ 
If $\alpha, \beta \in \pi$ then  $| \{ \alpha, \beta \}^{\h} | > 2.$

\medbreak 

\noindent 
AXIOM [3] \\
If $A, B, C \in \PP$ then $\{ A, B, C \}^{\h} \ne \emptyset.$ \\
If $\alpha, \beta, \gamma \in \PPi$ then $\{ \alpha, \beta, \gamma \}^{\h} \ne \emptyset.$

\medbreak 

\noindent 
AXIOM [4] \\
Let $A \ne B$ in $\PP$ and $\alpha \ne \beta$ in $\PPi$. \\
 If $\{ A, B \} \h \{\alpha, \beta \}$ then $\{ A, B \}^{\h} = \{ \alpha, \beta \}^{\h \h}$ and $\{ \alpha, \beta \}^{\h} = \{ A, B \}^{\h \h}.$

\medbreak 

Notice that we may instead begin with the assumption that the disjoint union $\PP \cup \PPi$ is nonempty: if either $\PP$ or $\PPi$ is nonempty then so is the other by AXIOM [2] or AXIOM [3]; indeed, each of $\PP$ and $\PPi$ has at least three elements. Notice also that in AXIOM [4] only the (equivalent) inclusions $\{ A, B \}^{\h} \subseteq \{ \alpha, \beta \}^{\h \h}$ and $\{ \alpha, \beta \}^{\h} \subseteq \{ A, B \}^{\h \h}$ are new; the reverse inclusions hold already by the mere definition of incidence. 

\medbreak 

Now, let $A \ne B$ in $\PP$ and let $C \in \PP$ also. We claim that $\{ A, B \}^{\h} \subseteq C^{\h} \Leftrightarrow C \in \{ A, B \}^{\h \h}$: on the one hand, if $\{ A, B \}^{\h} \subseteq C^{\h}$ then $C \in C^{\h \h} \subseteq \{ A, B \}^{\h \h}$ on account of the Galois properties of incidence; on the other hand, if $C \in \{ A, B \}^{\h \h}$ and $\pi \in \{ A, B \}^{\h}$ then $C \h \pi$ so that $\pi \in C^{\h}$ by definition. We may say that $C$ is {\it collinear} with $A$ and $B$ precisely when either (hence each) of these equivalent conditions is satisfied. The set $\{ A, B \}^{\h \h}$ comprising all points collinear with $A$ and $B$ is then traditionally a {\it pencil of points}. A parallel analysis applies to $\alpha \ne \beta$ in $\PPi$: the set $\{ \alpha, \beta \}^{\h \h}$ comprising all planes {\it collinear} with $\alpha$ and $\beta$ is traditionally a {\it pencil of planes}. 

\medbreak 

In order to define a line in our geometry, we combine these pencils. Thus: a {\it line} is a subset of $\PP \cup \PPi$ having the form $\{ A, B \}^{\h \h} \cup \{ \alpha, \beta \}^{\h \h}$ for $A \ne B$ in $\PP$ and $\alpha \ne \beta$ in $\PPi$ such that $\{ A, B \} \h  \{ \alpha, \beta \}$; notice that AXIOM [4] allows us to write this line equally as $\{ \alpha, \beta \}^{\h} \cup \{ A, B \}^{\h}$. For simplicity, we shall denote this line by either $AB$ or $\alpha \beta$ according to convenience. This line may also be described as $CD$ for any $C \ne D$ in $\{ \alpha, \beta \}^{\h}$: indeed, AXIOM [4] gives $\{C, D \}^{\h \h} = \{ \alpha, \beta \}^{\h} = \{ A, B \}^{\h \h}$; likewise, the line may also be described as $\gamma \delta$ for any $\gamma \ne \delta$ in $\{ A, B \}^{\h}$. Elements of $\{ A, B \}^{\h \h} = \{ \alpha, \beta \}^{\h}$ may be called points of the line $AB$. 

\medbreak 

AXIOM [3] says nothing new when $C \in  \{ A, B \}^{\h \h}$: in this case, $\{ A, B, C \}^{\h} = \{A, B \}^{\h}$ is already nonempty by AXIOM [2]. Accordingly, AXIOM [3] may essentially be replaced by the following theorem together with its counterpart for planes. 

\medbreak 

\begin{theorem} \label{!plane}
If the points $A, B, C$ are not collinear then $| \{ A, B, C \}^{\h} | = 1$. 
\end{theorem}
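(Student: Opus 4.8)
The plan is to obtain existence immediately from AXIOM [3] and to prove uniqueness by contradiction using AXIOM [4]. I would first isolate how the non-collinearity hypothesis is used: it guarantees in particular that $A\neq B$, and it is exactly the assertion that $C\notin\{A,B\}^{\h\h}$ (a triple being regarded as collinear as soon as two of its members coincide or its third member lies in the pencil of the first two). Since AXIOM [3] makes $\{A,B,C\}^{\h}$ nonempty, it then suffices to rule out the possibility that $\{A,B,C\}^{\h}$ contains two distinct planes.

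So suppose, toward a contradiction, that $\pi\neq\rho$ are planes both lying in $\{A,B,C\}^{\h}$. Then $\pi$ and $\rho$ each pass through $A$ and through $B$, so $\{A,B\}\h\{\pi,\rho\}$; as also $A\neq B$ in $\PP$ and $\pi\neq\rho$ in $\PPi$, AXIOM [4] applies and yields $\{\pi,\rho\}^{\h}=\{A,B\}^{\h\h}$. Now I bring in the third point: since $\pi$ and $\rho$ also pass through $C$, we have $C\in\{\pi,\rho\}^{\h}=\{A,B\}^{\h\h}$. By the equivalence recalled just before the theorem, this says precisely that $C$ is collinear with $A$ and $B$, hence that $A,B,C$ are collinear — contrary to hypothesis. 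Therefore $|\{A,B,C\}^{\h}|=1$.

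I do not anticipate a serious obstacle; the delicate points are purely a matter of bookkeeping. One must verify that the hypotheses of AXIOM [4] are genuinely in force, namely that the two candidate planes are distinct (the contradiction hypothesis) and that the two chosen points are distinct (this is exactly where non-collinearity enters), and one must be explicit that ``$A,B,C$ not collinear'' is the negation of ``$A\neq B$ and $C\in\{A,B\}^{\h\h}$'', so that the derived membership $C\in\{A,B\}^{\h\h}$ really does contradict it. The counterpart for planes follows either by the now automatic principle of duality or, equally, by repeating the argument verbatim with the roles of $\PP$ and $\PPi$, and of points and planes, interchanged.
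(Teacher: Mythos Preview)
Your proof is correct and is essentially the paper's own argument, only unpacked more explicitly: the paper says that if $\sigma\ne\tau$ lie in $\{A,B,C\}^{\h}$ then $A,B,C$ are all points of the line $\sigma\tau$, which is exactly your observation that $C\in\{\sigma,\tau\}^{\h}=\{A,B\}^{\h\h}$ via AXIOM~[4]. The only cosmetic difference is that the paper leans on the just-introduced language of lines to compress the invocation of AXIOM~[4], whereas you cite the axiom directly.
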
 

\begin{proof} 
AXIOM [3] insists that $\{ A, B, C \}^{\h}$ is nonempty. If $\sigma \ne \tau \in \{ A, B, C \}^{\h}$ then $A, B, C$ are points of the line $\sigma \tau$. 
\end{proof} 

\medbreak 

Otherwise said, three given non-collinear points lie in a unique plane; likewise, three given non-collinear planes pass through a unique point. 

\medbreak 

Notice that our axioms are invariant under the interchange $\PP \leftrightarrow \PPi$. It follows that if points and planes are interchanged in any theorem, the result is a theorem (whose proof is obtained simply by interchange of points and planes in the proof of the original). This is the principle of {\it duality}, according to which the elements `point' and `plane' are {\it dual} while `line' is {\it self-dual}. For example, we have just seen this principle in action following Theorem \ref{!plane}. 

\medbreak 

We shall write $\LLL$ for the set of lines and shall also follow the custom of labelling individual lines by lower case Roman letters: when $\ell \in \LLL$ we may write $\ell_{\PP} = \ell \cap \PP$ and $\ell_{\PPi} = \ell \cap \PPi$; thus $\ell = \ell_{\PP} \cup \ell_{\PPi}$ where $\ell_{\PPi} = \ell_{\PP}^{\h}$ and $\ell_{\PP} = \ell_{\PPi}^{\h}$. The notion of incidence is extended to include lines according to the following (appropriately dual) definitions. Line $\ell$ is incident to (or passes through) point $P$ precisely when $P \in \ell$; equivalently, $P \h \ell_{\PPi}$. Line $\ell$ is incident to (or lies in) plane $\pi$ precisely when $\pi \in \ell$; equivalently, $\pi \h \ell_{\PP}$. Line $\ell'$ is incident to (or meets) line $\ell''$ precisely when $\ell' \cap \ell'' \ne \emptyset$; we claim that $\ell'$ and $\ell''$ then share both a point and a plane. 

\medbreak 

\begin{theorem} \label{meet}
If $\ell'$ and $\ell''$ are lines, then $\ell'_{\PP} \cap \ell''_{\PP} \ne \emptyset \Leftrightarrow \ell'_{\PPi} \cap \ell''_{\PPi} \ne \emptyset.$
\end{theorem}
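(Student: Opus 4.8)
The plan is to prove only the implication
$\ell'_{\PP}\cap\ell''_{\PP}\neq\emptyset \Rightarrow \ell'_{\PPi}\cap\ell''_{\PPi}\neq\emptyset$, and to obtain the converse for free by duality: the axioms are invariant under $\PP\leftrightarrow\PPi$, and this interchange carries lines to lines while swapping $\ell_{\PP}$ with $\ell_{\PPi}$ for every line $\ell$, so the reverse implication is exactly the point/plane dual of the one I shall prove.

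So I would assume $P\in\ell'_{\PP}\cap\ell''_{\PP}$. I first note that every line carries at least three points: writing $\ell'_{\PP}=\{\alpha,\beta\}^{\h}$ for distinct $\alpha,\beta\in\ell'_{\PPi}$ (available because $|\ell'_{\PPi}|>2$ by AXIOM [2]), another application of AXIOM [2] gives $|\ell'_{\PP}|=|\{\alpha,\beta\}^{\h}|>2$, and likewise for $\ell''$. Hence I may choose $Q\in\ell'_{\PP}$ with $Q\neq P$ and $R\in\ell''_{\PP}$ with $R\neq P$. Since $P$ and $Q$ are distinct collinear points, the remarks preceding Theorem~\ref{!plane} (together with AXIOM [4]) let me describe the line $\ell'$ as $PQ$, so that $\ell'_{\PPi}=\{P,Q\}^{\h}$; similarly $\ell''=PR$ and $\ell''_{\PPi}=\{P,R\}^{\h}$.

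The conclusion then drops out of AXIOM [3]: the set $\{P,Q,R\}^{\h}$ is nonempty, and any plane $\pi$ in it satisfies $\pi\in\{P,Q\}^{\h}=\ell'_{\PPi}$ and $\pi\in\{P,R\}^{\h}=\ell''_{\PPi}$, so $\pi\in\ell'_{\PPi}\cap\ell''_{\PPi}$. I do not anticipate a genuine obstacle; the only step requiring care is the legitimacy of rewriting $\ell'$ as $PQ$, which rests on the fact that a line has at least three points — so that a second point $Q\neq P$ exists — and on AXIOM [4], as exploited before Theorem~\ref{!plane}, which guarantees that the plane-set of the line determined by any two of its points $P,Q$ is precisely $\{P,Q\}^{\h}$.
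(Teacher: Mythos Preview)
Your proof is correct and follows essentially the same route as the paper's: take a common point $P$, pick a second point on each line distinct from $P$, rewrite the lines through those pairs, and then produce a common plane through the three points, with the converse obtained by duality. Your direct appeal to AXIOM~[3] is mildly slicker than the paper's invocation of Theorem~\ref{!plane}, since the latter requires the three points to be non-collinear and therefore forces a separate handling of the case $\ell'=\ell''$, which your argument avoids.
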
 

\begin{proof} 
If $\ell' = \ell''$ then there is nothing to prove; so suppose $\ell' \ne \ell''$. Let $\ell' = A' B'$ and $\ell'' = A'' B''$. Assume $P \in A' B' \cap A'' B''$; without loss, say $P \ne A'$ and $P \ne A''$. It follows that $P A' = A' B'$ and $P A'' = A'' B''$; it also follows that $P, A'$ and $A''$ are not collinear, for $A' B' \ne A'' B''$. Theorem \ref{!plane} passes a unique plane $\pi$ through $P, A'$ and $A''$; thus $\pi \in P A' \cap P A'' = A' B' \cap A'' B''$. The converse direction is the dual. 
\end{proof} 

\medbreak

A closer inspection of the proof shows that if the incident lines $\ell'$ and $\ell''$ are distinct then $| \ell'_{\PP} \cap \ell''_{\PP} | = 1$ and $| \ell'_{\PPi} \cap \ell''_{\PPi} | = 1$: thus, $\ell'$ and $\ell''$ share a unique point and a unique plane. 

\medbreak 

The next result is neither a surprise nor crucial to our development, but is included for its intrinsic interest. 

\medbreak  

\begin{theorem} \label{proper}
If $B \ne C$ in $\alpha^{\h}$ then $\{ B,  C \}^{\h \h}$ is a proper subset of $\alpha^{\h}$. 
\end{theorem}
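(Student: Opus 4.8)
The plan is to establish the inclusion $\{B,C\}^{\h\h}\subseteq\alpha^{\h}$ outright and then to produce a point of $\alpha$ that fails to be collinear with $B$ and $C$. The inclusion is routine: since $B$ and $C$ both lie in $\alpha$ we have $\alpha\in\{B,C\}^{\h}$, so whenever $D\in\{B,C\}^{\h\h}$ --- equivalently $\{B,C\}^{\h}\subseteq D^{\h}$, by the characterization of collinearity recorded above --- it follows that $\alpha\in D^{\h}$, that is, $D\in\alpha^{\h}$.

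For the properness I would argue as follows. By AXIOM [1] there is a plane $\epsilon$ with $B\notin\epsilon^{\h}$; since $B\in\alpha^{\h}$ this forces $\epsilon\ne\alpha$. Applying the plane form of AXIOM [2] to the distinct planes $\alpha$ and $\epsilon$ gives $|\{\alpha,\epsilon\}^{\h}|>2$, so $\alpha^{\h}\cap\epsilon^{\h}$ contains two distinct points $P$ and $Q$, and neither of them is $B$ because $B\notin\epsilon^{\h}$. I claim that $P$ and $Q$ cannot both lie in $\{B,C\}^{\h\h}$. Indeed, if they did, then $P$ and $Q$ would be two distinct points of the line $BC$ (which is available since $B\ne C$ and AXIOM [2] supplies two distinct planes through $B$ and $C$), whence that line is equally the line $PQ$; comparing the plane-parts of these two descriptions of one and the same line yields $\{P,Q\}^{\h}=\{B,C\}^{\h}$. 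But $\epsilon$ passes through both $P$ and $Q$, so $\epsilon\in\{P,Q\}^{\h}=\{B,C\}^{\h}$ and in particular $B\in\epsilon^{\h}$, contrary to the choice of $\epsilon$. Hence at least one of $P,Q$ belongs to $\alpha^{\h}$ but not to $\{B,C\}^{\h\h}$, and together with the first paragraph this shows that $\{B,C\}^{\h\h}$ is a proper subset of $\alpha^{\h}$.

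I expect the one delicate step to be the identification $\{P,Q\}^{\h}=\{B,C\}^{\h}$ --- the assertion that a plane meeting a line in two distinct points contains the whole line --- which rests on AXIOM [4] together with the freedom, already noted in the text, to name a line by any two of its points; the remaining manipulations are just bookkeeping with the Galois properties of $\h$. The dual statement, concerning planes through a point, then needs no separate argument, by the principle of duality.
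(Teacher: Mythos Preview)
Your argument is correct, and it follows a genuinely different --- and more economical --- route than the paper's.  The paper invokes AXIOM~[1] to produce a \emph{point} $P$ off $\alpha$, then calls on Theorem~\ref{!plane} (and hence AXIOM~[3]) to pass a plane $\pi$ through $P,B,C$; it next selects a third point $S$ on $BC$, a second plane $\sigma\ne\pi$ through $P$ and $S$, and finally observes that the line $\alpha\sigma$ furnishes points of $\alpha$ off $BC$.  You instead apply AXIOM~[1] in its dual form --- choosing a \emph{plane} $\epsilon$ that misses $B$ --- and intersect $\epsilon$ with $\alpha$ straightaway: the line $\alpha\epsilon$ already lies in $\alpha$, and by your AXIOM~[4] argument it can share at most one point with $BC$ (else $\epsilon$ would contain $B$).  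Your proof thus sidesteps Theorem~\ref{!plane} and AXIOM~[3] altogether, resting only on AXIOMS~[1], [2] and~[4]; besides being shorter, this makes clear that the properness is a consequence of the line-determination axiom~[4] rather than of the dimensional axiom~[3].
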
 

\begin{proof} 
$\{ B,  C \}^{\h \h}$ is a subset of $\alpha^{\h}$ because of the Galois properties of incidence. AXIOM [1] furnishes a point $P \notin \alpha^{\h}$ while AXIOM [2] and AXIOM [4] place a point $S \notin \{ B, C \}$ in $\{ B, C \}^{\h \h}$. Theorem \ref{!plane} passes a unique plane $\pi$ through the non-collinear points $P, B$ and $C$; note that $\{ P, S, C \}^{\h} = \{ P, B, S \}^{\h} = \{ \pi \}$ also. AXIOM [2] provides a plane $\sigma \ne \pi$ in $\{ P, S \}^{\h}$; from $\alpha \notin P^{\h} \ni \sigma$ it follows that $\alpha \ne \sigma$. AXIOM [2] gives $\{ \alpha, \sigma \}^{\h} \subseteq \alpha^{\h}$ at least two points besides $S$; none of these can lie in $\{ B, C \}^{\h \h}$, for that would force $\sigma = \pi$. 
\end{proof} 

\medbreak 

In other words, if a line $\ell$  lies in a plane $\alpha$ then $\alpha$ passes not only through each point of $\ell$ but also through other points. As a complementary result, if the line $\ell$ does not lie in the plane $\alpha$, then $\ell$ and $\alpha$ pass through a unique common point; this is essentially the dual to Theorem \ref{!plane}. 

\medbreak 

This concludes what we wish to say here about our system itself. In particular, we refrain from further consideration of redundancies among our axioms; we shall not address fully the matter of independence. Regarding the matter of consistency, we now proceed to relate our axiomatic framework for three-dimensional projective space to the traditional Veblen-Young presentation [VY]. 

\medbreak 

Recall that the Veblen-Young approach is based on points and lines, in terms of which planes are defined as follows: if point $P$ and line $\ell$ are not incident, then the points of the plane $P \ell$ are precisely those points that lie on the lines that join $P$ to points of $\ell$; this is consistent with our framework. 

\medbreak 

\begin{theorem}
Let $\pi$ be the unique plane through the non-collinear points $A, B, C$. Then $P \in \pi^{\h}$ precisely when $P$ lies on $CD$ for some point $D$ on $A B$. 
\end{theorem}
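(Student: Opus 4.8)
The plan is to prove the two implications in turn, using throughout two standing observations: first, since $\pi$ passes through $A$ and $B$ we have $\pi \in \{A, B\}^{\h}$, so $\pi$ is incident to every point of the line $AB$; second, $C \in \pi^{\h}$ by hypothesis. I also record that, because $A, B, C$ are not collinear, $C \notin \{A, B\}^{\h \h}$, so any point lying on $AB$ is automatically distinct from $C$ and the notation $CD$ for such a point $D$ is legitimate.

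Consider first the direction in which we are given that $P$ lies on $CD$ for some point $D$ on $AB$ and must show $P \in \pi^{\h}$. Fix such a $D$. The line $CD$ has point-set $\{C, D\}^{\h \h}$, and $P$ belongs to it. Since $\pi$ is incident to $C$ and, by the first standing observation, also to $D$, we have $\pi \in \{C, D\}^{\h}$; the Galois properties of $\h$ then yield $\{C, D\}^{\h \h} \subseteq \{\pi\}^{\h} = \pi^{\h}$, whence $P \in \pi^{\h}$.

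For the other direction, suppose $P \in \pi^{\h}$; we must produce a point $D$ on $AB$ with $P$ on $CD$. If $P = C$ we are done immediately: take $D = A$, which lies on $AB$, and note that $C$ lies on $CD = CA$. So assume $P \ne C$ and consider the line $CP$, with point-set $\{C, P\}^{\h \h}$ and plane-set $\{C, P\}^{\h}$. Both $C$ and $P$ are incident to $\pi$, so $\pi \in (CP)_{\PPi}$; and $\pi \in (AB)_{\PPi}$ by the first standing observation. Thus $\pi \in (CP)_{\PPi} \cap (AB)_{\PPi}$, so Theorem \ref{meet} delivers a point $D \in (CP)_{\PP} \cap (AB)_{\PP}$. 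Now $D$ lies on $AB$, hence $D \ne C$, and $D$ lies on $CP$; since a line is determined by any two of its distinct points, $CD = CP$, and therefore $P$ — a point of $CP = CD$ — lies on $CD$ with $D$ on $AB$.

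The single substantive step is the one in the converse that promotes ``$\pi$ is incident to both lines $CP$ and $AB$'' to ``the lines $CP$ and $AB$ share a point'', and this is precisely the content of Theorem \ref{meet}; the rest is routine manipulation of the Galois properties together with the fact, established when lines were introduced, that any two distinct points of a line determine it. I anticipate no real difficulty, the only points demanding a little care being the degenerate case $P = C$, dispatched separately above, and the check that the point $D$ produced really is different from $C$, so that the line $CD$ makes sense.
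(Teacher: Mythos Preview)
Your proof is correct and follows essentially the same route as the paper: for $P\in\pi^{\h}$ you separate off the case $P=C$ and otherwise invoke Theorem~\ref{meet} on the lines $AB$ and $CP$ (which share the plane $\pi$) to obtain $D$, while for the reverse direction you use the Galois inclusion $\{C,D\}^{\h\h}\subseteq\pi^{\h}$. The only differences are cosmetic: you spell out why $D\neq C$ and why $CD=CP$, and you argue the Galois inclusion directly rather than citing the analogous step in Theorem~\ref{proper}.
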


\begin{proof} 
($\Rightarrow$) Let $P \in \pi^{\h}$: if $P = C$ then there is nothing to do; if $P \ne C$ then $A B$ and $C P$ have a point $D$ in common by Theorem \ref{meet}. ($\Leftarrow$) If $D \in \{ A, B \}^{\h \h}$ then $\{ C, D \}^{\h \h} \subseteq \pi^{\h}$ as in Theorem \ref{proper}; thus, if $P$ is a point of $CD$ then $P \in \pi^{\h}$. 
\end{proof} 

\medbreak 

Now, we claim that the Veblen-Young axioms of alignment and extension are theorems in our axiomatic framework. In the following discussion we take these Veblen-Young axioms in order,  offering a proof within our framework for each of them. Our labelling of these axioms follows [VY]; our phrasing of them differs inessentially from that in [VY]. 

\medbreak 

Consider first the axioms of alignment. 

\medbreak 

\noindent
AXIOM (A1): If $A$ and $B$ are distinct points, there is at least one line on both $A$ and $B$. 

\medbreak 

[Indeed, $A$ and $B$ lie on the line $A B = \{ A, B \}^{\h \h} \cup \{ A, B \}^{\h}$.] 

\medbreak 

\noindent
AXIOM (A2): If $A$ and $B$ are distinct points, there is at most one line on both $A$ and $B$.

\medbreak 

[Our definition of lines shows that if also $A$ and $B$ lie on $CD$ then $A B = C D$.] 

\medbreak 

\noindent
AXIOM (A3): If $A, B, C$ are points not on the same line, the line joining distinct points $D$ (on the line $B C$) and $E$ (on the line $C A$) meets the line $A B$. 

\medbreak 

[$\{ A, B, C \}^{\h}$ is a singleton $\{ \pi \}$ according to Theorem \ref{!plane}. Note that $D \in \{ B, C \}^{\h \h} \subseteq \pi^{\h}$ and $E \in \{ C, A \}^{\h \h} \subseteq \pi^{\h}$. Thus the lines $A B$ and $D E$ have the plane $\pi$ in common and so they have a point in common by Theorem \ref{meet}.]

\medbreak 

Consider next the axioms of extension. 

\medbreak 

\noindent
AXIOM (E0): There are at least three points on every line. 

\medbreak 

[This is immediate from AXIOM [2].]

\medbreak 

\noindent
AXIOM (E1): There exists at least one line. 

\medbreak 

[$\PP$ has at least three points; now summon AXIOM (A1).]  

\medbreak 

\noindent
AXIOM (E2): All points are not on the same line. 

\medbreak 

[This follows at once from the next axiom. See also Theorem \ref{proper}.] 

\medbreak 

\noindent 
AXIOM (E3): All points are not on the same plane. 

\medbreak 

[If $\pi \in \PPi$ is any plane then there exists $P \in \PP \setminus \pi^{\h}$ by AXIOM [1].]

\medbreak 

\noindent
AXIOM (E3$'$): Any two distinct planes have a line in common. 

\medbreak 

[If $\alpha \ne \beta$ in $\PPi$ then $\alpha \beta$ is a line incident to both.]

\bigbreak 

Thus, each of these Veblen-Young axioms is indeed a theorem of our axiomatic framework. In the opposite direction, we claim that each of our axioms is a theorem of the Veblen-Young axiomatic framework. The detailed verification of this claim is left as an exercise; most of the ingredients already appear as theorems and corollaries in Chapter 1 of [VY]. We also leave as an exercise the pleasure of directly relating the self-dual formulation (with line as the {\it secondary} element) presented here to the self-dual formulation (with line as the {\it primary} element) presented in [R]. 

\bigbreak

\begin{center} 
{\small R}{\footnotesize EFERENCES}
\end{center} 
\bigbreak 

[R] P. L. Robinson, {\it Projective Space: Lines and Duality}, arXiv 1506.06051 (2015). 

\medbreak 

[VY] O. Veblen and J.W. Young, {\it Projective Geometry}, Volume I, Ginn and Company, Boston (1910).

\medbreak

\end{document}